\title{On the domination of Cartesian product of directed cycles: Results for certain equivalence classes of lengths}
\author{Michel Mollard\thanks{CNRS Universit\'e Joseph Fourier}\\
\small Institut Fourier \\[-0.8ex]
\small 100, rue des Maths\\[-0.8ex]
\small 38402 St Martin d'Hères Cedex FRANCE \\
\small \texttt{michel.mollard@ujf-grenoble.fr}
}
\date{}% a enlever pour avoir la date
\newtheorem{defin}{Definition}
\newtheorem{lemm}[defin]{Lemma}
\newtheorem{theor}[defin]{Theorem}
\newtheorem{prop}[defin]{Proposition}
\newtheorem{conj}[defin]{Conjecture}
\newcommand{\qed}{\hspace{\stretch{1}}$\Box$}
\newenvironment{proof}{{\bf Proof :}}{\qed}
\begin{document}
\modulolinenumbers[2]
\linenumbers

 \maketitle

  \abstract{Let $\gamma(\overrightarrow{C_m}\Box \overrightarrow{C_n})$ be the domination number of the Cartesian product of directed cycles $\overrightarrow{C_m}$ and $\overrightarrow{C_n}$ for $m,n\geq2$. Shaheen \cite{Shaheen} and  Liu et al.(\cite{Liu1}, \cite{Liu2}) determined the value of $\gamma(\overrightarrow{C_m}\Box \overrightarrow{C_n})$ when $m \leq 6$ and \cite{Liu2} when both $m$ and $n$ $\equiv 0$ $(mod\: 3)$.
  In this article we give, in general, the value of $\gamma(\overrightarrow{C_m}\Box \overrightarrow{C_n})$ when $m\equiv 2$ $(mod\: 3)$ and improve the known lower bounds for most of the remaining cases. We also disprove the conjectured formula for the case $m$ $\equiv 0$ $(mod\: 3)$ appearing in \cite{Liu2}.
  \\
  
\textbf{AMS Classification[2010]}:05C69,05C38.\\

\textbf{Keywords}: Directed graph, Cartesian product, Domination number, Directed cycle.\\}

  \section{Introduction and definitions}
  Let $D=(V,A)$ be a finite directed graph (digraph for short) without loops or multiple arcs.
  
  A vertex $u$ \emph{dominates} a vertex $v$ if $u=v$ or $uv \in A$.
  A set $W\subseteq V$ is a \emph{dominating} set of $D$ if any vertex of $V$ is dominated by at least one vertex of $W$.
  The \emph{domination number} of $D$ , denoted by $\gamma(D)$ is the minimum cardinality of a dominating set. The set $V$ is a dominating set thus $\gamma(D)$ is finite. These definitions  extend to digraphs the classical domination notion for undirected graphs.
      
   The determination of  the domination number of  a directed or undirected graph is, in general, a difficult question in graph theory. Furthermore this problem has connections with information theory. For example the domination number of hypercubes is linked to error-correcting codes. 
   Among the lot of related works, Haynes et al. (\cite{ Haynes1}, \cite{ Haynes2}), mention the special cases of the domination of Cartesian products of undirected paths, cycles or more generally graphs(\cite{Chang} to \cite{Harntel}, \cite{Jacobson}, \cite{Kla}).
    
For two digraphs $D_1=(V_1,A_1)$ and $D_2=(V_2,A_2)$ the \emph{Cartesian product} $D_1\Box D_2$ is the digraph with vertex set $V_1\times V_2$ and $(x_1,x_2)(y_1,y_2) \in A(D_1\Box D_2)$ if and only if $x_1y_1 \in A_1$ and $x_2=y_2$ or $x_2y_2 \in A_2$ and $x_1=y_1$.
Note that $D_2\Box D_1$ is isomorphic to $D_1\Box D_2$.
In \cite{Shaheen} Shaheen determined the domination number of $\overrightarrow{C_m}\Box \overrightarrow{C_n}$ for $m \leq 6$ and arbitrary $n$. 
In two articles \cite{Liu1}, \cite{Liu2} Liu et al. considered independently the domination number of the Cartesian product of two directed cycles.  They gave also the value of $\gamma(\overrightarrow{C_m}\Box \overrightarrow{C_n})$ when $m \leq 6$ and when both $m$ and $n$ $\equiv 0$ $(mod\: 3)$ \cite{Liu2}. Furthermore they proposed lower and upper bounds for the general case.

In this paper we are able to give, in general, the value of $\gamma(\overrightarrow{C_m}\Box \overrightarrow{C_n})$ when $m\equiv 2$ $(mod \  3)$ and we improve the lower bounds for most of the still unknown cases. We also disprove the conjectured formula appearing in \cite{Liu2} for the case $m$ $\equiv 0$ $(mod \ 3)$.

 We denote the vertices of a directed cycle $\overrightarrow{C_n}$ by $C_n=\left\{0,1,\dots,n-1 \right\}$, the integers considered modulo $n$.  Thus, when used for vertex labeling, $a+b$ and $a-b$ will denote the vertices $a+b$ and $a-b$ $(mod \ n)$. Notice that there exists an arc $xy$  from $x$ to $y$ in $\overrightarrow{C_n}$ if and only if $y\equiv x+1$ $(mod \ n)$, thus with our convention, if and only if $y= x+1$.
  For any $i$ in $\left\{0,1,\dots,n-1 \right\}$ we will denote by $\overrightarrow{C_m^i}$ the subgraph of $\overrightarrow{C_m}\Box \overrightarrow{C_n}$ induced by the vertices
 $\left\{(k,i)\: |\: k\in\left\{0,1,\dots,m-1 \right\}\right\}$. Note that $\overrightarrow{C_m^i}$ is isomorphic to $\overrightarrow{C_m}$. We will denote by $C_m^i$ the set of vertices of $\overrightarrow{C_m^i}$.
 
\section{General bounds and the case $m$ $\equiv 2$ $(mod \  3)$}
We start this section by developing a general upper bound for $\gamma(\overrightarrow{C_m}\Box \overrightarrow{C_n})$. Then we will construct minimum dominating sets for $m$ $\equiv 2$ $(mod \  3)$.
These optimal sets will be obtained from  integer solutions of a system of equations. 
  \begin{prop}\label{prop:bornecons} Let $W$ be a dominating set of $\overrightarrow{C_m}\Box \overrightarrow{C_n}$. Then for all $i$ in $\left\{0,1,\dots,n-1 \right\}$ considered modulo $n$ we have
  $\left|W\cap C_m^{i-1}\right|\:+\: 2 \left|W\cap C_m^i\right| \geq m$.
  \end{prop}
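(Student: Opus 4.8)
The plan is to prove the inequality by a direct double-counting argument, based on identifying exactly which vertices of $\overrightarrow{C_m}\Box \overrightarrow{C_n}$ are able to dominate a vertex lying in the column $C_m^i$. Everything reduces to pinning down the out-neighbourhood of a vertex and then counting domination incidences into a single column.

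First I would determine the out-neighbourhood of an arbitrary vertex. By the definition of the Cartesian product, together with the fact that the only arc leaving $a$ in $\overrightarrow{C_m}$ (resp.\ $\overrightarrow{C_n}$) goes to $a+1$, a vertex $(k,j)$ has arcs only to $(k+1,j)$ and to $(k,j+1)$. Hence $(k,j)$ dominates precisely the three vertices $(k,j)$, $(k+1,j)$ and $(k,j+1)$.

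Next I would ask, for a fixed $i$, which vertices of $W$ can dominate a vertex of $C_m^i$, that is, a vertex whose second coordinate equals $i$. Running through the three dominated vertices above, $(k,j)$ and $(k+1,j)$ lie in $C_m^i$ only when $j=i$, while $(k,j+1)$ lies in $C_m^i$ only when $j=i-1$. Consequently the only possible dominators of vertices of $C_m^i$ are the elements of $W\cap C_m^i$ and of $W\cap C_m^{i-1}$, and here I would record the key observation: each vertex $(k,i)\in W\cap C_m^i$ dominates exactly the two vertices $(k,i)$ and $(k+1,i)$ of $C_m^i$, whereas each vertex $(k,i-1)\in W\cap C_m^{i-1}$ dominates exactly the single vertex $(k,i)$ of $C_m^i$.

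Finally I would count incidences. Since $W$ is a dominating set, each of the $m$ vertices of $C_m^i$ is dominated at least once, so the number of ordered pairs (dominator, dominated) whose dominated entry lies in $C_m^i$ is at least $m$. By the previous step this same number is at most $2\left|W\cap C_m^i\right|+\left|W\cap C_m^{i-1}\right|$, which yields the claimed inequality. The argument is essentially routine once the out-neighbourhoods are identified; the only point requiring care is the \emph{asymmetric} count, namely weight $2$ for the column $C_m^i$ and weight $1$ for the column $C_m^{i-1}$. This asymmetry reflects the orientation of the arcs and is precisely what produces the coefficient $2$ in front of $\left|W\cap C_m^i\right|$, so I would be careful to justify that a vertex of $W\cap C_m^i$ genuinely covers two distinct vertices of its own column while a vertex of $W\cap C_m^{i-1}$ reaches only one.
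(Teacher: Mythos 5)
Your proposal is correct and is essentially the paper's own argument: both identify that vertices of $C_m^i$ can only be dominated from $W\cap C_m^i$ (two vertices each) or $W\cap C_m^{i-1}$ (one vertex each), and then count domination incidences into the column. Your version merely spells out the out-neighbourhoods and the double count more explicitly than the paper does.
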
  
  \begin{proof}
   The $m$ vertices of $C_m^i$ can only be dominated by vertices of $W\cap C_m^{i}$ and $W\cap C_m^{i-1}$. Each of the vertices of $W\cap C_m^{i}$ dominates two vertices in $C_m^i$. Similarly each of the vertices of $W\cap C_m^{i-1}$ dominates one vertex in $C_m^i$. The result follows.
 
  \end{proof}

\begin{theor}\label{Th:borne} 	Let $m,n \geq 2$ and $k_1=\left\lfloor \frac{m}{3}\right\rfloor $ then
\begin{enumerate}[(i)]

\item if $m$ $\equiv 0$ $(mod \  3)$ then $\gamma(\overrightarrow{C_m}\Box \overrightarrow{C_n})\geq nk_1$, or
\item if $m$ $\equiv 1$ $(mod \  3)$ then $\gamma(\overrightarrow{C_m}\Box \overrightarrow{C_n})\geq nk_1+\frac{n}{2}$, or
\item if $m$ $\equiv 2$ $(mod \  3)$ then $\gamma(\overrightarrow{C_m}\Box \overrightarrow{C_n})\geq nk_1+n$.

\end{enumerate}
  \end{theor}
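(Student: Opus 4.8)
The plan is to translate Proposition~\ref{prop:bornecons} into a purely arithmetic statement about the column weights and then exploit integrality. Write $a_i=\left|W\cap C_m^i\right|$ for a dominating set $W$, so that $\gamma=\sum_{i=0}^{n-1}a_i$ and, by Proposition~\ref{prop:bornecons}, $a_{i-1}+2a_i\geq m$ for every $i$ taken modulo $n$. Setting $r=m-3k_1\in\{0,1,2\}$ (the residue of $m$ modulo $3$) and $b_i=a_i-k_1$, these inequalities become $b_{i-1}+2b_i\geq r$ for all $i$, and the three target bounds amount to showing $\sum_i b_i\geq 0$, $\sum_i b_i\geq n/2$ and $\sum_i b_i\geq n$ in the cases $r=0,1,2$ respectively.

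For part (i) ($r=0$) this is immediate: summing $b_{i-1}+2b_i\geq 0$ over all $i$ gives $3\sum_i b_i\geq 0$, equivalently $3\gamma\geq nm=3nk_1$. The same naive summation in the cases $r=1,2$ only yields $\sum_i b_i\geq rn/3$, which is strictly weaker than what is claimed; the real content of (ii) and (iii) is therefore that the integrality of the $b_i$ forces a better bound than the linear relaxation.

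To capture this I would split the indices into $A=\{i:b_i\geq 1\}$ and $B=\{i:b_i\leq 0\}$, a partition since each $b_i$ is an integer. The key observation is that whenever $i\in B$ the inequality $b_{i-1}+2b_i\geq r\geq 1$ forces $b_{i-1}\geq r-2b_i\geq 1$, that is $i-1\in A$. Consequently $B$ contains no two cyclically consecutive indices, whence $|B|\leq\lfloor n/2\rfloor$ and $|A|=n-|B|\geq\lceil n/2\rceil$. Moreover the predecessors $\{i-1:i\in B\}$ form $|B|$ distinct elements of $A$; pairing each $i\in B$ with its predecessor and using $b_{i-1}+b_i=(b_{i-1}+2b_i)-b_i\geq r-b_i\geq r$, while each of the remaining $|A|-|B|$ elements of $A$ contributes at least $1$, would yield $\sum_i b_i\geq(|A|-|B|)+r|B|=|A|+(r-1)|B|$.

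Substituting the bound on $|A|$ then finishes both remaining cases: for $r=1$ we obtain $\sum_i b_i\geq|A|\geq n/2$, proving (ii), and for $r=2$ we obtain $\sum_i b_i\geq|A|+|B|=n$, proving (iii). The step requiring the most care is the charging argument of the previous paragraph: one must verify that the map $i\mapsto i-1$ is injective on $B$ (guaranteed precisely by the independence of $B$) so that predecessors are counted without repetition, and that $|A|-|B|\geq 0$ so that the leftover $A$-terms contribute nonnegatively. I expect this integrality-and-independence step to be the main obstacle, since it is exactly where the argument must beat the trivial averaging bound $rn/3$.
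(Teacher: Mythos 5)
Your proof is correct and follows essentially the same route as the paper's: after the change of variables $b_i=a_i-k_1$, your set $B=\{i: b_i\leq 0\}$ is exactly the paper's set $J=\{j: a_j\leq k_1\}$, your predecessor set $\{i-1: i\in B\}$ is the paper's $J'$, and your charging argument (no two cyclically consecutive deficient columns, pair each deficient column with its predecessor, remaining columns contribute at least $k_1+1$) reproduces the paper's decomposition $\{0,\dots,n-1\}=J\cup J'\cup K$ and the bound $|W|\geq |J|(2k_1+r)+(n-2|J|)(k_1+1)$. Your presentation is a slightly cleaner unification of cases (ii) and (iii) via the residue $r$, but the key ideas and the counting are identical.
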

  
\begin{proof}

Let $W$ be a dominating set of $\overrightarrow{C_m}\Box \overrightarrow{C_n}$ and for any $i$ in $\left\{0,1,\dots,n-1 \right\}$ let $a_i=|W\cap C_m^i|.$
Notice first, as noticed by Liu et al.(\cite{Liu2}), that each of the vertices of $W$ dominates three vertices of $\overrightarrow{C_m}\Box \overrightarrow{C_n}$ and thus $\left|W\right|\geq \frac{mn}{3}$. This general bound give the announced result  for $m=3k_1$, $\gamma(\overrightarrow{C_m}\Box \overrightarrow{C_n})\geq nk_1+\frac{n}{3}$ for $m=3k_1+1$ and  $\gamma(\overrightarrow{C_m}\Box \overrightarrow{C_n})\geq nk_1+2\frac{n}{3}$ for $m=3k_1+2$. We will improve these two last results to verify parts $(ii)$ and $(iii)$ of the theorem.

%
%Assume first $m=3k_1+1$. Let $J$ be the set of $j\in\left\{0,1,\dots,n-1 \right\}$ such that  $a_j\leq k_1$. If $J=\emptyset$ then $|W|\geq n (k_1+1)\geq nk_1+\frac{n}{2}$ and we are done. Otherwise let $J'=\left\{k\;|\;k+1\:(mod \  n)\in J\right\}$. By Proposition \ref{prop:bornecons}, for any   $i$ in $\left\{0,1,\dots,n-1 \right\}$ considered modulo $n$, we have $a_{i-1}+2a_i\geq3k_1+1$. Then if $i$ belongs to $J$,  $a_{i-1}+a_i\geq 2k_1+1$. Thus $J\cap J'=\emptyset$ and $\sum_{i\in J\cup J'} a_i\geq\left|J\right|(2k_1+1)$.  Therefore $\sum_{i\in\left\{0,1,\dots,n-1 \right\}} a_i\geq|J|(2k_1+1)+(n-2|J|)(k_1+1)=nk_1+n-|J|\geq nk_1+\frac{n}{2}$.

Assume first $m=3k_1+1$. Let $J$ be the set of $j\in\left\{0,1,\dots,n-1 \right\}$ such that  $a_j\leq k_1$. If $J=\emptyset$ then $|W|\geq n (k_1+1)\geq nk_1+\frac{n}{2}$ and we are done. Otherwise let $J'=\left\{j\;|\;j+1\:(mod \  n)\in J\right\}$.
By Proposition \ref{prop:bornecons}, for any   $i$ in $\left\{0,1,\dots,n-1 \right\}$ considered modulo $n$, we have $a_{i-1}+2a_i\geq3k_1+1$. Then if $i$ belongs to $J$,  $a_{i-1}+a_i\geq 2k_1+1$. 
A first consequence is that there are no consecutive indices, taken modulo $n$, in $J$. Indeed if $j-1$ and $j$ are in $J$ then, by definition of $J$, $a_{j-1}+a_j\leq 2k_1$ in contradiction with the previous inequality. By definition of $J'$ we have thus $J\cap J'=\emptyset$.

Now let $K =\left\{j\in\left\{0,1,\dots,n-1 \right\} \;|\;j\notin J \cup J'\right\}$.
We can write $\left\{0,1,\dots,n-1 \right\}= J \cup J'\cup K$  where $J$,$J'$ and $K$ are disjoint sets.
Notice that $\theta:j\mapsto j-1$(modulo n) induces a one to one mapping between $J$ and $J'$. 

The cardinality of $W$ is $|W|=\sum_{i\in\left\{0,1,\dots,n-1 \right\}} a_i= \sum_{i\in J} a_i+\sum_{i\in J'} a_i+\sum_{i\in K} a_i$.
We can use $\theta$ for grouping 2 by 2 the elements of  $J \cup J'$ and write $\sum_{i\in J} a_i+ \sum_{i\in J'}a_i= \sum_{i\in J} a_i+ \sum_{i\in J}a_{\theta(i)}=\sum_{i\in J} (a_i+a_{i-1})$. Using $a_{i-1}+a_i\geq 2k_1+1$, because $i\in J$, we obtain $\sum_{i\in J} a_i+\sum_{i\in J'}a_i\geq \left|J\right|(2k_1+1)$.

If $i\in K$ then $i\notin J$ and  $a_i\geq k_1+1$. Since $|K|=n-2|J|$ we have $\sum_{i\in K} a_i\geq (n-2|J|)(k_1+1)$.
Then $|W|=\sum_{i\in\left\{0,1,\dots,n-1 \right\}} a_i\geq|J|(2k_1+1)+(n-2|J|)(k_1+1)= nk_1+n-|J|$.

Since $|J|=|J'|$ and $J\cap J'=\emptyset$ , $n-|J|\geq \frac{n}{2}$ and the conclusion for (ii) follows.\\

The case $m=3k_1+2$ is similar. Let $J$ be the set of $j\in\left\{0,1,\dots,n-1 \right\}$ such that  $a_j\leq k_1$. If $J=\emptyset$ then we are done. Otherwise let $J'=\left\{j\;|\;j+1\:(mod \  n)\in J\right\}$. If $i\in J$  we have $a_{i-1}+2a_i\geq3k_1+2$ thus $a_{i-1}+a_i\geq 2k_1+2$. Then $J\cap J'=\emptyset$ and $\sum_{i\in J\cup J'} a_i\geq\left|J\right|(2k_1+2)$.  Therefore $\sum_{i\in\left\{0,1,\dots,n-1 \right\}} a_i\geq|J|(2k_1+2)+(n-2|J|)(k_1+1)\geq n(k_1+1)$.

\end{proof}

  Let us now study in detail the case $m$ $\equiv 2$ $(mod \  3)$. Assume $m=3k_1+2$. Let $A$ be the set of $k_1+1$ vertices of $\overrightarrow{C_m}$ defined by $A=\{0\}\cup\{2+3p\;|\;p=0,1,\dots,k_1-1\}=\{0\}\cup\{2,5,\dots,m-6,m-3\}$.
  For any $i$ in $\left\{0,1,\dots,m-1 \right\}$ let us call $A_i=\{j\;|\;j-i\:(mod\  m)\in A\}$  the \emph{translate}, considered modulo $m$, of $A$ by $i$. We have  thus $A_i=\{i\}\cup\{i+2,i+5,\dots,i-6,i-3\}$(see Figure \ref{fig:Ai}).
  
  We will call a set $S$ of vertices of $\overrightarrow{C_m}\Box \overrightarrow{C_n}$ a $A$-set if for any $j$ in  $\{0,1,\dots,n-1\}$ we have $S \cap C_m^j=A_{i}$ for some $i$ in  $\{0,1,\dots,n-1\}$. It will be convenient to denote this index $i$, function of $j$, as $i_j$.
 If $S$ is a $A$-set then $\left|S\right|$= $n(k_1+1)$; thus if a set is both a $A$-set and a dominating set, by Theorem \ref{Th:borne}, it is minimum and we have $\gamma(\overrightarrow{C_m}\Box \overrightarrow{C_n})= n(k_1+1)$. 
 
\begin{figure}[h]
\begin{center}
\includegraphics [scale=0.3] {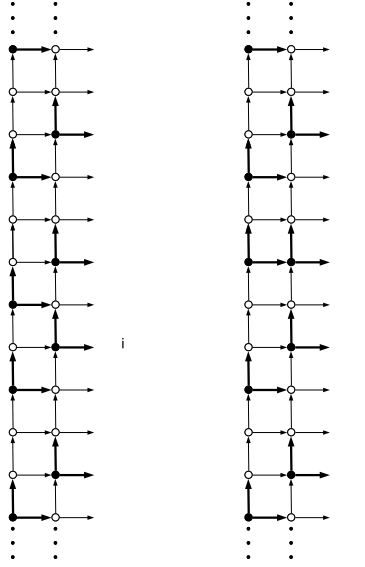}
\caption{\label{fig:Ai} $A_i$,$A_{i-1}$ and $A_{i+2}$}
\end{center}

\end{figure}

\begin{lemm}\label{le:Aset}
Let $m= 3k_1+2$. Let $S$ be a $A$-set and for any $j$ in $\{0,1,\dots,n-1\}$ define $i_j$ as the index such that $S \cap C_m^j=A_{i_j}$.
Assume that:
$$
\left\{
\begin{array}{l}
\mbox{for any }j \in \left\{1,\dots,n-1 \right\}\; i_{j}\equiv i_{j-1}+1\: (mod \  m)\mbox{ or } i_{j}\equiv i_{j-1}-2\: (mod \  m)\\
\mbox{and}\\
i_{0}\equiv i_{n-1}+1\: (mod \  m)\mbox{ or } i_{0}\equiv i_{n-1}-2\: (mod \  m).
\end{array}
\right.
$$
then $S$ is a dominating set of $\overrightarrow{C_m}\Box \overrightarrow{C_n}$.  
  \end{lemm}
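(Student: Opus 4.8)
The plan is to reduce domination of the whole product to a covering condition on consecutive columns, to observe that the hypothesis forces three consecutive translates of $A$ to appear, and then to check that any three consecutive translates of $A$ already cover $\overrightarrow{C_m}$. First I would determine which elements of $S$ can dominate a fixed vertex $(k,j)$ of $C_m^j$. In $\overrightarrow{C_m}\Box \overrightarrow{C_n}$ the out-neighbours of $(k,j)$ are $(k+1,j)$ and $(k,j+1)$, so a vertex dominates $(k,j)$ exactly when it is $(k,j)$, $(k-1,j)$ or $(k,j-1)$. Since $S\cap C_m^j=A_{i_j}$ and $S\cap C_m^{j-1}=A_{i_{j-1}}$, the vertex $(k,j)$ is dominated iff $k\in A_{i_j}$, or $k-1\in A_{i_j}$, or $k\in A_{i_{j-1}}$. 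As the translate of $A_i$ by $1$ is $A_{i+1}$, the middle condition reads $k\in A_{i_j+1}$. Hence the whole column $C_m^j$ is dominated iff $A_{i_{j-1}}\cup A_{i_j}\cup A_{i_j+1}=C_m$, and $S$ is a dominating set iff this holds for every $j$ (indices taken modulo $n$).

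Next I would read off the index sets allowed by the hypothesis. If $i_j\equiv i_{j-1}+1$ then the three indices $i_{j-1},\,i_j,\,i_j+1$ are $i_{j-1},\,i_{j-1}+1,\,i_{j-1}+2$; if $i_j\equiv i_{j-1}-2$ they are $i_{j-1}-2,\,i_{j-1}-1,\,i_{j-1}$. In both cases $\{i_{j-1},i_j,i_j+1\}$ is a block of three consecutive residues modulo $m$. Because the hypothesis, including the wrap-around line governing column $C_m^0$ through $C_m^{n-1}$, guarantees one of these two shifts for every $j$, it suffices to prove the single identity that any three consecutive translates $A_t\cup A_{t+1}\cup A_{t+2}$ equal $C_m$.

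Finally I would verify this identity, which by translation reduces to $t=0$. The set $A_0=\{0,2,5,\dots,m-3\}$ has, cyclically, one gap of length $2$ (from $0$ to $2$) and $k_1$ gaps of length $3$, so a direct check of residues shows that $A_0\cup A_1$ covers everything except the middles of the length-$3$ gaps, i.e. $C_m\setminus(A_0\cup A_1)=\{4,7,\dots,m-1\}$. Since $A_2=A_0+2=\{2,4,7,\dots,m-1\}$ contains exactly this leftover set, we obtain $A_0\cup A_1\cup A_2=C_m$, and translating by $t$ completes the claim and hence the lemma. The only points requiring care are the modular bookkeeping around the exceptional element $0$ of $A$ (the endpoint of the length-$2$ gap) and remembering to apply the cyclic hypothesis to the pair $(n-1,0)$ as well, so that no column is omitted; once the reduction to three consecutive translates is in hand, the remaining verification is routine.
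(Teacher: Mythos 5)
Your proof is correct and takes essentially the same route as the paper's: both reduce domination of column $C_m^j$ to the fact that the vertices not dominated within the column (the complement of $A_{i_j}\cup A_{i_j+1}$) are contained in $A_{i_j-1}$ and in $A_{i_j+2}$, which is exactly your observation that any three consecutive translates $A_t\cup A_{t+1}\cup A_{t+2}$ cover $C_m$, applied with the wrap-around pair $(n-1,0)$ included. The only difference is presentational (the paper names the leftover set $T$ and checks the two inclusions directly, while you phrase it as a single covering identity), so there is nothing to fix.
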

  
\begin{proof}

Note first that for any $i$ in $\left\{0,1,\dots,m-1 \right\}$ the set of non dominated vertices of $C_m$ by $A_i$ is  $T=        \{i+4,i+7,\dots,i-4,i-1\}$. Note also that $A_{i+2}=\{i+2\}\cup\{i+4,i+7,\dots,i-4,i-1\}$ and $A_{i-1}=\{i-1\}\cup\{i+1,i+4,\dots,i-7,i-4\}$. Thus $T\subset A_{i+2}$ and $T\subset A_{i-1}$.

Let $j$ in $\left\{1,\dots,n-1 \right\}$. Let us prove that the vertices of $C_m^j$ are dominated. Indeed, by the previous remark and the lemma hypothesis, the vertices non dominated by  $S \cap C_m^j$ are dominated by $S \cap C_m^{j-1}$(see Figure \ref{fig:Ai}). 
For the same reasons  the vertices of $C_m^0$ are dominated by those of $S \cap C_m^0$ and $S \cap C_m^{n-1}$.

\end{proof}

We will prove next that the existence of  solutions to some system of equations over integers implies the existence of a $A$-set satisfying the hypothesis of Lemma \ref{le:Aset}.

\begin{lemm}\label{le:entiers}
Let $m= 3k_1+2$. If there exist integers $a,b\geq0$ such that
$$
\left\{
\begin{array}{l}
a+b=n-1\\
\mbox{and}\\
a-2b \equiv 2\:  (mod \  m)\: \mbox{or } a-2b \equiv m-1\:  (mod \  m)
\end{array}
\right.
$$

then $\gamma(\overrightarrow{C_m}\Box \overrightarrow{C_n})= n(k_1+1)$.  
  \end{lemm}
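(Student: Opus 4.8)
The plan is to construct an explicit $A$-set $S$ whose index sequence $(i_j)_{j=0}^{n-1}$ satisfies the hypotheses of Lemma~\ref{le:Aset}, thereby making $S$ a dominating set of cardinality $n(k_1+1)$, which by Theorem~\ref{Th:borne}(iii) is minimum. The key observation is that Lemma~\ref{le:Aset} requires each consecutive step $i_j - i_{j-1}$ (indices taken modulo $m$, including the wrap-around step from $i_{n-1}$ to $i_0$) to equal either $+1$ or $-2$. There are exactly $n$ such steps around the cyclic sequence $i_0, i_1, \dots, i_{n-1}, i_0$, and the integers $a, b \geq 0$ in the hypothesis are meant to count them: I would let $a$ be the number of steps equal to $+1$ and $b$ the number of steps equal to $-2$, so that $a + b = n$.

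**First I would** reconcile this with the stated equation $a + b = n - 1$. Since we are free to choose the starting index, set $i_0 = 0$ and use the $n-1$ internal steps (from $i_0$ to $i_1$, up through $i_{n-2}$ to $i_{n-1}$) to realize $a$ increments of $+1$ and $b$ decrements of $-2$ in any order, which is consistent with $a + b = n-1$. After these $n-1$ steps the value of $i_{n-1}$ is $i_{n-1} \equiv a - 2b \pmod m$. The final wrap-around constraint of Lemma~\ref{le:Aset} demands $i_0 \equiv i_{n-1} + 1$ or $i_0 \equiv i_{n-1} - 2 \pmod m$; since $i_0 = 0$, this reads $0 \equiv (a - 2b) + 1$ or $0 \equiv (a-2b) - 2 \pmod m$, i.e. $a - 2b \equiv -1 \equiv m-1 \pmod m$ or $a - 2b \equiv 2 \pmod m$. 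This is precisely the second hypothesis of the lemma, so the wrap-around step is automatically a valid $+1$ or $-2$ step.

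**The main step** is then simply to exhibit the sequence. Given $a, b \geq 0$ with $a + b = n-1$ and $a - 2b \equiv 2$ or $m-1 \pmod m$, I define $i_0 = 0$ and, arranging the $a$ steps of $+1$ and the $b$ steps of $-2$ in any order for $j = 1, \dots, n-1$, I set $i_j \equiv i_{j-1} + (\text{step}) \pmod m$. By construction each internal step is $+1$ or $-2$, so the first hypothesis of Lemma~\ref{le:Aset} holds; and the computation above shows the wrap-around step from $i_{n-1}$ back to $i_0$ is also $+1$ or $-2$, giving the second hypothesis. Letting $S$ be the $A$-set with $S \cap C_m^j = A_{i_j}$, Lemma~\ref{le:Aset} makes $S$ a dominating set, and since $|S| = n(k_1+1)$ matches the lower bound of Theorem~\ref{Th:borne}(iii), we conclude $\gamma(\overrightarrow{C_m}\Box \overrightarrow{C_n}) = n(k_1+1)$.

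**The part requiring care** is the bookkeeping of the modular arithmetic at the wrap-around: one must verify that the two alternatives $a-2b \equiv 2$ and $a - 2b \equiv m-1$ correspond exactly to the two allowed final steps ($-2$ and $+1$ respectively), rather than some off-by-one mismatch. I expect no genuine obstacle here, only the need to track signs consistently modulo $m$; the construction is otherwise a direct translation of the hypothesis into the index sequence demanded by Lemma~\ref{le:Aset}.
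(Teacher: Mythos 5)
Your proposal is correct and is essentially the paper's own proof: the paper likewise builds the index sequence starting from $i_0=0$ (encoded as a word on the alphabet $\{1,-2\}$ with $a$ occurrences of $1$ and $b$ of $-2$), computes $i_{n-1}\equiv a-2b \pmod m$, checks that the hypothesis makes the wrap-around step a valid $+1$ or $-2$, and then concludes via Lemma~\ref{le:Aset} and Theorem~\ref{Th:borne}. Your sign bookkeeping at the wrap-around is exactly right, so there is no gap.
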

  
\begin{proof}
Consider a word $w= w_1\dots w_{n-1}$ on the alphabet $\{1,-2\}$ with $a$ occurrences of $1$ and $b$ of $-2$. Such a word exists, for example $w=1^a(-2)^b$. We can associate with $w$ a set $S$ of vertices of $\overrightarrow{C_m}\Box \overrightarrow{C_n}$ using the following algorithm:
\begin{itemize}
\item $S \cap C_m^0=A_0$
\item For $i=1$ to $n-1$ do\\
begin\\
Let $k$ such that $S \cap C_m^{i-1}=A_{k}$\\
If $w_i=1$ let $k'\equiv k+1$ $(mod \ m)$ else $k'\equiv k-2$ $(mod \ m)$\\
$S\cap C_m^i:=A_{k'}$\\
end
\end {itemize}
By construction $S$ is a $A$-set.
Notice that we  have $S\cap C_m^{n-1}:=A_{i_{n-1}}$ where $i_{n-1}\equiv \sum_{k=1}^{n-1}{w_k}\equiv a-2b \: (mod \ m)$.  Thus  $i_{n-1}\equiv 2\: (mod \ m)$ or  $i_{n-1}\equiv m-1\: (mod \ m)$. By Lemma \ref{le:Aset} $S$ is a dominating set.
Furthermore, because $S$ is a $A$-set, $\left|S\right|$= $n(k_1+1)$, thus  by Theorem \ref{Th:borne} it is minimum and we have $\gamma(\overrightarrow{C_m}\Box \overrightarrow{C_n})= n(k_1+1)$.
\end{proof}  
With the exception of one sub case we can find solutions $(a,b)$ of the system and thus obtain minimum dominating sets for $m\equiv2$ $mod \ 3$.

  \begin{theor}\label{th:2mod3}
  Let $m,n\geq2$ and $m\equiv2$ $mod \ 3$. Let $k_1=\left\lfloor \frac{m}{3}\right\rfloor$ and   $k_2=\left\lfloor \frac{n}{3}\right\rfloor$.
 \begin{enumerate}[(i)]
 \item if $n=3k_2$ then $\gamma(\overrightarrow{C_m}\Box \overrightarrow{C_n})= n(k_1+1)$,and
 \item if $n=3k_2+1$ and $2k_2\geq k_1$ then $\gamma(\overrightarrow{C_m}\Box \overrightarrow{C_n})= n(k_1+1)$,and
 \item if $n=3k_2+1$ and $2k_2<k_1$ then $\gamma(\overrightarrow{C_m}\Box \overrightarrow{C_n})>n(k_1+1) $,and
 \item if $n=3k_2+2 $ and $n\geq m$ then $\gamma(\overrightarrow{C_m}\Box \overrightarrow{C_n})= n(k_1+1)$,and
 \item if $n=3k_2+2 $ and $n\leq m$ then $\gamma(\overrightarrow{C_m}\Box \overrightarrow{C_n})= m(k_2+1)$.
  \end{enumerate} 
  \end{theor}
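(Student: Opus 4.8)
The plan is to treat the three equalities (i), (ii), (iv) as existence problems for the integer system of Lemma \ref{le:entiers}, to deduce (v) from (iv) by symmetry, and to reserve a separate structural argument for the strict inequality (iii), which is where the real difficulty lies.

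For (i), (ii) and (iv) I would first rewrite the condition of Lemma \ref{le:entiers}. Eliminating $a=n-1-b$, the requirement becomes: find $b\in\{0,\dots,n-1\}$ with $3b\equiv n$ or $3b\equiv n-3\pmod m$. Since $m\equiv 2\pmod 3$ we have $\gcd(3,m)=1$, and in fact $3(k_1+1)=m+1$, so $3^{-1}\equiv k_1+1\pmod m$; hence each congruence has a unique residue for $b$, and the whole problem is to land that residue inside $\{0,\dots,n-1\}$. For (i), $n=3k_2$, the choice $b=k_2$ gives $3b=n$ directly and lies in range, so the equality holds with no extra hypothesis. For (ii), $n=3k_2+1$, solving $3b\equiv n-3$ gives $b\equiv k_1+k_2\pmod m$, and a representative lies in $\{0,\dots,3k_2\}$ as soon as $k_1\le 2k_2$, i.e. under the hypothesis $2k_2\ge k_1$. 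For (iv), $n=3k_2+2\ge m$, the residue of $b$ automatically has a representative in $\{0,\dots,m-1\}\subseteq\{0,\dots,n-1\}$ because $n-1\ge m-1$. In each case Lemma \ref{le:entiers} yields the upper bound $n(k_1+1)$, matching the lower bound of Theorem \ref{Th:borne}(iii).

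Statement (v) I would obtain for free from the isomorphism $\overrightarrow{C_m}\Box\overrightarrow{C_n}\cong\overrightarrow{C_n}\Box\overrightarrow{C_m}$. Since $n=3k_2+2\equiv 2\pmod 3$ as well, the transposed product falls under case (iv) with the roles of $m$ and $n$ exchanged (its first cycle has length $n\equiv 2\pmod3$ and its second length $m\ge n$), and the value produced is $m(k_2+1)$, as required; on the diagonal $n=m$ both formulas agree.

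The hard part is the strict lower bound (iii). Here I would start from the equality case of the proof of Theorem \ref{Th:borne}(iii): if a dominating set $W$ had $|W|=n(k_1+1)$, then every inequality in that proof is tight, which pins the row sizes down to $a_i\in\{k_1,k_1+1,k_1+2\}$, with each light row ($a_i=k_1$) immediately preceded by a heavy row ($a_{i-1}=k_1+2$) and all other rows of size $k_1+1$. I would then feed in the positional content behind Proposition \ref{prop:bornecons}: writing $P_i$ for the positions of $W$ in row $i$ and $U_i$ for the set of vertices of $C_m^i$ not dominated within their own row, one has $U_i\subseteq P_{i-1}$, and for a light row this forces $P_{i-1}$ to be exactly the set of ``interior holes'' of $P_i$ (all gaps of $P_i$ being $\ge 2$). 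The goal is to show that these rigid relations propagate around the cycle as a sequence of shifts in $\{+1,-2\}$ whose total is $\equiv 0\pmod m$; writing $q$ for the number of $-2$ steps this means $3q\equiv n\pmod m$ with $0\le q\le n$, and using $3^{-1}\equiv k_1+1$ the only candidate is $q\equiv k_1+k_2+1$, which fails to lie in $[0,n]$ precisely when $2k_2<k_1$, giving the contradiction. The main obstacle is exactly this propagation: unlike the sets produced in Lemma \ref{le:entiers}, a minimum dominating set need not consist of translates of $A$ (a size-$(k_1+1)$ set with all gaps $\ge 2$ is not forced to be such a translate), so the delicate point is to show that the forced size pattern together with the containments $U_i\subseteq P_{i-1}$ nevertheless determines a well-defined shift from each row to the next, especially in the uniform case $a_i=k_1+1$ for all $i$.
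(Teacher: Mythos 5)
Your treatment of (i), (ii), (iv) and (v) is correct and is essentially the paper's own proof: the paper likewise plugs explicit solutions into Lemma \ref{le:entiers} (for (i) it takes $a=2k_2$, $b=k_2-1$; for (ii) exactly your $a=2k_2-k_1$, $b=k_1+k_2$; for (iv) $a=2k_2-2k_1$, $b=k_2+2k_1+1$), and it gets (v) from the isomorphism $\overrightarrow{C_m}\Box \overrightarrow{C_n}\cong\overrightarrow{C_n}\Box \overrightarrow{C_m}$ just as you do. Your reformulation of the system as $3b\equiv n$ or $3b\equiv n-3\ (mod \ m)$ with $3^{-1}\equiv k_1+1\ (mod \ m)$ is a clean repackaging of the same computation, and your observation in (iv) that any residue has a representative in $\{0,\dots,m-1\}\subseteq\{0,\dots,n-1\}$ is marginally slicker than the paper's explicit pair.

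The genuine gap is in (iii), and you flagged it yourself. Your plan --- assume $|W|=n(k_1+1)$, extract the tight row-size pattern from the equality case of Theorem \ref{Th:borne}(iii), then show the rows must drift like translates of $A$ by shifts in $\{+1,-2\}$ and derive $3q\equiv n\ (mod \ m)$ --- stalls exactly at the step you name: nothing forces a minimum dominating set to be an $A$-set, a row of size $k_1+1$ need not be a translate of $A$, and the containment $U_i\subseteq P_{i-1}$ alone does not define a shift from one row to the next. As written, (iii) is a program, not a proof. Moreover, the machinery is unnecessary: the symmetry trick you used for (v) also disposes of (iii). Since $n=3k_2+1\equiv 1\ (mod \ 3)$, apply Theorem \ref{Th:borne}(ii) to the transposed product to get $\gamma(\overrightarrow{C_m}\Box \overrightarrow{C_n})=\gamma(\overrightarrow{C_n}\Box \overrightarrow{C_m})\geq mk_2+\frac{m}{2}=\frac{(2k_2+1)m}{2}$, and then compute, with $m=3k_1+2$ and $n=3k_2+1$, that $\frac{(2k_2+1)m}{2}-n(k_1+1)=\frac{k_1}{2}-k_2>0$ precisely because $2k_2<k_1$. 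That two-line argument is the paper's entire proof of (iii); no structural analysis of hypothetical minimum dominating sets is needed anywhere in the theorem.
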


\begin{proof}
We will use Lemma \ref{le:entiers} considering the  following integer solutions of
$$
\left\{
\begin{array}{l}
a,b\geq 0 \\
a+b=n-1\\
a-2b \equiv 2\:(mod \  m) \mbox{ or } a-2b \equiv m-1 \:(mod \  m).\\
\end{array}
\right.
$$

\begin{enumerate}[(i)]
\item if $n=3k_2$ then $k_2\geq1$. Take  $a=2k_2$ and $b=k_2-1$. 
\item if $n=3k_2+1$ and $2k_2\geq k_1$ then take  $a=2k_2-k_1$ and $b=k_2+k_1$. 
\item if $n=3k_2+1$ and $2k_2 < k_1$ then $\gamma(\overrightarrow{C_m}\Box \overrightarrow{C_n})$=$\gamma(\overrightarrow{C_n}\Box \overrightarrow{C_m})\geq \frac{(2k_2+1)m}{2}$ by Theorem \ref{Th:borne}.
Furthermore $\frac{(2k_2+1)m}{2}-n(k_1+1)=\frac{k_1}{2}-k_2>0$.
\item if $n=3k_2+2$ and $k_2\geq k_1$ then take  $a=2k_2-2k_1$ and $b=k_2+2k_1+1$.
\item if $n=3k_2+2$ and $k_2\leq k_1$ then use $\gamma(\overrightarrow{C_m}\Box \overrightarrow{C_n})$=$\gamma(\overrightarrow{C_n}\Box \overrightarrow{C_m})$.

\end{enumerate}

\end{proof}

\section{The case $m$ $\equiv 0$ $(mod \  3)$}

 In \cite{Liu2} Liu et al. conjectured the following formula:
 
 \begin{conj}\label{co:1}
Let $k\geq2$. Then $\gamma(\overrightarrow{C_{3k}}\Box \overrightarrow{C_n})= k(n+1)$ for $n \not \equiv 0$ $(mod \  3)$.
 \end{conj}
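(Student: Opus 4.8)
The plan is to establish Conjecture \ref{co:1} by proving the two matching inequalities $\gamma(\overrightarrow{C_{3k}}\Box \overrightarrow{C_n}) \le k(n+1)$ and $\gamma(\overrightarrow{C_{3k}}\Box \overrightarrow{C_n}) \ge k(n+1)$ for every $n \not\equiv 0 \pmod 3$. Writing $m = 3k$ we have $k_1 = \lfloor m/3\rfloor = k$, so the general bound $|W| \ge mn/3 = kn$ coming from Theorem \ref{Th:borne}(i) falls short of the target by exactly $k$. The whole difficulty is therefore to produce this additive surplus of $k$ in the lower bound, whereas the upper bound should follow from an explicit construction in the spirit of the $A$-set machinery of Lemmas \ref{le:Aset} and \ref{le:entiers}.

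For the upper bound I would adapt the translate technique. Take as base column the set $B = \{0,3,6,\dots,3(k-1)\}$ of size $k$, and for each column $j$ put $S \cap C_m^j = B + s_j$ for an integer shift $s_j$. A direct check shows that the vertices of $C_m^j$ missed by its own column are exactly those whose label is $\equiv s_j + 2 \pmod 3$, and that these are covered by the previous column precisely when $s_j \equiv s_{j-1} + 1 \pmod 3$. If one could keep all shifts congruent in this way the set would be a perfect dominating set of size $kn$, but the shifts must close up cyclically, which forces $n \equiv 0 \pmod 3$. For $n \not\equiv 0 \pmod 3$ I would insert a small number of enlarged columns carrying $k+1$ vertices (covering two consecutive residue classes at once) to absorb the residual shift; the bookkeeping, encoded as a system of equations over the nonnegative integers \emph{à la} Lemma \ref{le:entiers}, would be designed so that exactly $k$ extra vertices are spent while the shifts close up modulo $3$. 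This yields a dominating set of cardinality $kn + k = k(n+1)$, and hence $\gamma \le k(n+1)$.

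For the lower bound I would refine the counting argument of Theorem \ref{Th:borne}. Setting $a_i = |W \cap C_m^i|$, Proposition \ref{prop:bornecons} gives $a_{i-1} + 2a_i \ge 3k$ for all $i$ taken modulo $n$. Calling a column \emph{deficient} when $a_i \le k-1$, I would let $J$ be the set of deficient indices and pair each $i \in J$ with its predecessor: since $a_{i-1}+a_i \ge 3k - a_i \ge 2k+1$, no two deficient indices are consecutive, so $J$ and $J' = \{j : j+1 \in J\}$ are disjoint with $|J|=|J'|$. Bounding the non-deficient columns below by $a_i \ge k$ then gives $|W| = \sum_i a_i \ge |J|(2k+1) + (n-2|J|)k = nk + |J|$, so the surplus over $kn$ is governed entirely by the number of deficient columns.

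The step I expect to be the main obstacle — and the crux of the whole statement — is precisely this surplus. When $m = 3k$ the local inequality $a_{i-1} + 2a_i \ge 3k$ is met with equality by the uniform choice $a_i \equiv k$, which already totals $kn$ with no deficient column at all; unlike the cases $m \equiv 1,2 \pmod 3$ of Theorem \ref{Th:borne}, where every non-deficient column contributes an extra unit, here $|J|$ may well be zero and the refined count collapses to the trivial $|W| \ge nk$. Proving $\gamma \ge k(n+1)$ would therefore require a genuinely \emph{global} obstruction forcing $|J| \ge k$, i.e.\ showing that the perfect diagonal tiling cannot be completed when $n \not\equiv 0 \pmod 3$ without $k$ additional vertices. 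I anticipate that this is exactly where the argument resists: rather than a proof that the value cannot drop below $k(n+1)$, a careful analysis of how far it \emph{can} drop for specific residues of $n$ is what the remainder of the paper will need to address.
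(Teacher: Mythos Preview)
Your proposal attempts to \emph{prove} the conjecture, but the paper \emph{disproves} it. The statement labelled Conjecture~\ref{co:1} is reproduced from \cite{Liu2} precisely in order to exhibit a counterexample: by Theorem~\ref{th:liu} one has, for $n=4$ and $3k\not\equiv 0\pmod 8$,
\[
\gamma(\overrightarrow{C_{3k}}\Box \overrightarrow{C_4})=\gamma(\overrightarrow{C_4}\Box \overrightarrow{C_{3k}})=3k+\Bigl\lceil\tfrac{3k+1}{2}\Bigr\rceil,
\]
whereas the conjectured value is $k(n+1)=5k$. For $k=3$ this gives $14<15$; for $k=4$ it gives $19<20$; and in general the gap grows linearly in $k$. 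Thus the inequality $\gamma\ge k(n+1)$ that you set out to prove is simply false when $n\equiv 1\pmod 3$.

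Your own diagnosis of the lower-bound step is accurate but understated. You correctly observe that for $m=3k$ the refined count of Theorem~\ref{Th:borne} collapses to the trivial $|W|\ge kn$ because the local inequality $a_{i-1}+2a_i\ge 3k$ is saturated by $a_i\equiv k$, and you flag the missing ``global obstruction forcing $|J|\ge k$'' as the crux. What the paper shows is that no such obstruction exists: there are dominating sets strictly smaller than $k(n+1)$, so the surplus you are trying to manufacture is not there. Your upper-bound construction, even if carried out, would only establish $\gamma\le k(n+1)$, which is not sharp here; the conjecture fails on the lower-bound side, not the upper.
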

 
 Our Theorem \ref{th:2mod3} confirms the conjecture when $n  \equiv 2$ $(mod \  3)$. Unfortunately the formula is not always valid when $n  \equiv 1$ $(mod \  3)$.
 
 Indeed consider $C_{3k}\Box C_4$. In \cite{Liu1} the following result is proved:
 
 \begin{theor}\label{th:liu}
Let $n\geq2$. Then
 $\gamma(\overrightarrow{C_{4}}\Box \overrightarrow{C_n})= \frac{3n}{2}$ if $n \equiv 0$ $(mod \  8)$ and 
 $\gamma(\overrightarrow{C_{4}}\Box \overrightarrow{C_n})= n+\left\lceil \frac{n+1}{2} \right\rceil$ otherwise. 
 \end{theor}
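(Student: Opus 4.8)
The plan is to combine the layer-count lower bound already in hand with a "transfer" analysis that determines exactly when the bound $3n/2$ is attainable. Throughout, write $a_i=|W\cap C_4^i|$ for a dominating set $W$, as in the proof of Theorem~\ref{Th:borne}. Since $4\equiv1\pmod 3$ and $k_1=1$, part~(ii) of that theorem gives $\gamma(\overrightarrow{C_4}\Box\overrightarrow{C_n})\ge 3n/2$. First I would record the elementary identity that $n+\lceil (n+1)/2\rceil$ equals $3n/2+1$ when $n$ is even and $(3n+1)/2$ when $n$ is odd. For odd $n$ the quantity $3n/2$ is not an integer, so integrality upgrades the bound to $\lceil 3n/2\rceil=(3n+1)/2$, which settles the lower bound in the odd case at once; the remaining work is the even case together with all the matching constructions.

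For the structural part I would pass to an automaton on layers. Identify $S_i=W\cap C_4^i$ with a subset of $\mathbb{Z}_4$ and observe that $(k,i)$ is dominated iff $k\in S_i\cup(S_i+1)\cup S_{i-1}$; writing $D(S)=S\cup(S+1)$, the set $W$ is dominating iff $S_{i-1}\cup D(S_i)=\mathbb{Z}_4$ for every $i$ taken cyclically, i.e.\ iff $\mathbb{Z}_4\setminus D(S_i)\subseteq S_{i-1}$. A short case check records the key facts: a singleton $\{k\}$ has cost $1$ but forces $\{k+2,k+3\}\subseteq S_{i-1}$; an adjacent pair $\{k,k+1\}$ has cost $2$ and leaves exactly one position undominated from within its layer; and an opposite pair $\{k,k+2\}$ or any set of size $\ge 3$ has $D=\mathbb{Z}_4$, imposing no constraint on its predecessor. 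In particular two consecutive singletons are impossible and each singleton must be preceded by a set of cost $\ge 2$, so the mean cost per layer is at least $3/2$, recovering $3n/2$, with equality demanding a strict alternation of costs $1$ and $2$.

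The heart of the matter is the even case, and this modular-chaining step is the main obstacle. Here I would show that a dominating set of size exactly $3n/2$ is rigid. Re-running the equality analysis of Theorem~\ref{Th:borne}(ii) with $m=4$ forces $|J|=n/2$, hence $K=\emptyset$, and $a_{i-1}+a_i=3$ on the alternating index set $J$; since $a_i=0$ would force $a_{i-1}=4$, the counts must alternate $\dots,1,2,1,2,\dots$. Feeding this back into the domination condition fixes the geometry: a singleton's predecessor must contain an adjacent pair, which an opposite pair cannot, so every cost-$2$ layer is an adjacent pair; consequently a singleton $\{p\}$ is followed by the pair $\{p+1,p+2\}$ and then by the singleton $\{p-1\}$, so the singleton position decreases by $1$ every two layers. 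Closing the cycle of length $n$ therefore requires $n/2\equiv 0\pmod 4$, that is $n\equiv 0\pmod 8$. Contrapositively, for even $n\not\equiv 0\pmod 8$ no dominating set of size $3n/2$ exists, so $\gamma\ge 3n/2+1$. The delicate point to verify carefully is that the count-alternation truly forces this position-chaining with no escape, and that the resulting arithmetic progression of positions closes up only in the $\bmod 8$ case.

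It then remains to exhibit dominating sets meeting each bound. For $n\equiv 0\pmod 8$ I would write down the period-$8$ block of the forward chaining above, namely four singletons and four adjacent pairs whose positions cycle through all residues of $\mathbb{Z}_4$, of total cost $12=3\cdot 8/2$, and repeat it $n/8$ times. For even $n\not\equiv 0\pmod 8$ and for odd $n$ the perfect alternation cannot close, so I would use one fewer singleton and one more adjacent pair than perfect alternation, producing a short defect block of consecutive adjacent pairs whose internal choice realigns the singleton-phase by any required residue shift while adding exactly one vertex. A direct count then yields $3n/2+1$ for even $n\not\equiv 0\pmod 8$ and $(3n+1)/2$ for odd $n$, matching the lower bounds and completing the proof. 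These constructions are routine once the phase-realignment idea is in place; the genuinely subtle part remains the rigidity argument of the previous paragraph.
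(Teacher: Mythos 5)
This statement is not proved in the paper at all: it is Liu et al.'s theorem, quoted from \cite{Liu1} solely in order to refute Conjecture~\ref{co:1}, so there is no internal proof to compare against; your attempt must therefore stand on its own. The lower-bound half of your argument is correct, and it is the genuinely hard half: Theorem~\ref{Th:borne}(ii) gives $\gamma(\overrightarrow{C_4}\Box \overrightarrow{C_n})\geq \frac{3n}{2}$, integrality upgrades this to $\frac{3n+1}{2}$ for odd $n$, and your rigidity analysis of the equality case is sound --- equality forces $|J|=n/2$, $K=\emptyset$, the layer counts to alternate $1,2,1,2,\dots$, each cost-$2$ layer to be an adjacent pair, the pair after a singleton $\{p\}$ to be $\{p+1,p+2\}$ and the next singleton to be $\{p-1\}$, so the cycle closes only if $n/2\equiv 0 \pmod 4$, i.e.\ $n\equiv 0\pmod 8$. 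This correctly yields $\gamma\geq \frac{3n}{2}+1$ for even $n\not\equiv 0\pmod 8$.

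The gap is in your constructions. The claim that a defect block of \emph{consecutive adjacent pairs} ``realigns the singleton-phase by any required residue shift while adding exactly one vertex'' is false. If all layers are singletons or adjacent pairs, then a gap of $k$ adjacent pairs between two singletons produces a phase shift lying in $\{k+2,\dots,2k+1\}$ (the first pair is forced to be $\{p+1,p+2\}$, each further pair advances its start by $1$ or $2$, and the next singleton sits at the last start plus $2$). Hence with $s$ singletons the total shift around the cycle ranges exactly over the integers in $[n+s,\,2n-s]$, and at the target cost $2n-s$ this window is $\{\frac{3n}{2}-1,\frac{3n}{2},\frac{3n}{2}+1\}$ for even $n$ (where $s=\frac{n}{2}-1$) and $\{\frac{3n-1}{2},\frac{3n+1}{2}\}$ for odd $n$ (where $s=\frac{n-1}{2}$). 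These windows contain no multiple of $4$ precisely when $n\equiv 4\pmod 8$, respectively $n\equiv 1,7\pmod 8$, so for those residues your construction cannot exist. The smallest failure is $n=4$: the theorem asserts $\gamma(\overrightarrow{C_4}\Box \overrightarrow{C_4})=7$, but no $7$-vertex dominating set consists of one singleton and three adjacent pairs (the last pair would have to start at $p+2$, which is unreachable from the forced first pair $\{p+1,p+2\}$ in two steps of $+1$ or $+2$). The repair is to admit further layer types: an opposite pair $\{q,q+2\}$ covers its whole column and imposes no condition on its predecessor, so the two-layer block (opposite pair, suitably chosen adjacent pair) realigns the phase by \emph{any} residue at cost $4$, which settles all odd $n$; likewise the block singleton--triple--singleton gives a free realignment with an even number of layers, settling $n\equiv 4\pmod 8$. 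For instance $\{0\},\{1,3\},\{0,1\},\{2,3\}$ (columns $0,1,2,3$) is a $7$-vertex dominating set of $\overrightarrow{C_4}\Box \overrightarrow{C_4}$. With these extra blocks the matching upper bounds go through for every residue class and the theorem follows; without them, your proof covers only $n\equiv 0,2,3,5,6\pmod 8$.
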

 
 We have thus $\gamma(\overrightarrow{C_{3k}}\Box \overrightarrow{C_4})=  \gamma(\overrightarrow{C_{4}}\Box \overrightarrow{C_{3k}})=3k+\left\lceil \frac{3k+1}{2}\right\rceil $ when $k \not \equiv 0$ $(mod \  8)$.
 Alternately, Conjecture \ref{co:1} proposes the value $\gamma(\overrightarrow{C_{3k}}\Box \overrightarrow{C_4})=5k$. These two numbers are different when $k\geq3$.
 \section{Conclusion}
 Consider the possible remainder of $m$, $n$ modulo 3. For some of the nine possibilities, we have found exact values for $\gamma(\overrightarrow{C_m}\Box \overrightarrow{C_n})$. The remaining cases are:
 
\begin{enumerate}[a)]
	\item $m \equiv 0$ $(mod \  3)$ and $n \equiv 1$ $(mod \  3)$ 	
	\item The symmetrical case $m \equiv 1$ $(mod \  3)$ and $n \equiv 0$ $(mod \  3)$. 
	\item $m$ and $n$ $ \equiv 1$ $(mod \  3)$.

	\item	The case  $m$ or $n$ $ \equiv 2$ $(mod \  3)$ is not completely solved by Theorem \ref{th:2mod3}. The following subcases are still open
\begin{enumerate}[i)]
	\item $m \equiv 2$ $(mod \  3)$ and $n \equiv 1$ $(mod \  3)$ with $m>2n+1$ 	
	\item the symmetrical case 	$m \equiv 1$ $(mod \  3)$ and $n\equiv 2$ $(mod \  3)$ with $n>2m+1$. 	
\end{enumerate}	
\end{enumerate}		
		For these values of $m,n$ there does not always exist a dominating set reaching the bound stated if Theorem \ref{Th:borne} and thus the determination of $\gamma(\overrightarrow{C_m}\Box \overrightarrow{C_n})$ seems to be a more difficult problem.

\end{document}